\newcommand{\R}{\mathbb R}
 \newcommand{\N}{\mathbb N}
\newcommand{\ve}{\varepsilon}
\newcommand{\vp}{\varphi}
\newcommand \loc    {\text{loc}}
\newcommand{\dive}{{\rm div}}
\newtheorem{theorem}{Theorem}[section]
\newtheorem{proposition}[theorem]{Proposition}
 \newtheorem{remark}[theorem]{Remark}
\newtheorem{lemma}[theorem]{Lemma}
\newtheorem{definition}[theorem]{Definition}
\begin{document}
\title{  Wellposedness for stochastic continuity equations 
with Ladyzhenskaya-Prodi-Serrin condition }

\author{Wladimir Neves$^1$, Christian Olivera$^2$}

\date{}

\maketitle

\footnotetext[1]{Departamento de Matem\'atica, Universidade Federal
do Rio de Janeiro, C.P. 68530, Cidade Universit\'aria 21945-970,
Rio de Janeiro, Brazil. E-mail: {\sl wladimir@im.ufrj.br.}}

\footnotetext[2]{Departamento de
 Matem\'{a}tica, Universidade Estadual de Campinas, 13.081-970, Campinas, SP, 
 Brazil. e-mail:  {\sl colivera@ime.unicamp.br.}

\textit{Key words and phrases. 
Stochastic partial differential equation, Continuity
equation, well-posedness, stochastic characteristic method ,   Cauchy problem.}}


%
\begin{abstract}
We consider the stochastic divergence-free continuity equations with Ladyzhenskaya-Prodi-Serrin condition. 
Wellposedness is proved meanwhile uniqueness may fail for the deterministic PDE. 
The main issue of strong  uniqueness, in the probabilistic sense,  relies on stochastic characteristic method and 
 the generalized It\^o-Wentzell-Kunita formula. The stability property for the unique solution
is proved with respect to the initial data. Moreover, a persistence result is established 
by a representation formula.
 \end{abstract}
%
\maketitle

%

\section {Introduction} \label{Intro}

In this paper we establish wellposedness for stochastic divergence-free continuity equations. Namely, we consider 
the following Cauchy problem: Given an initial-data $u_0$,
find $u(t,x;\omega) \in \R$, satisfying 
\begin{equation}\label{trasport}
 \left \{
\begin{aligned}
    &\partial_t u(t, x;\omega) + \dive \Big(\, u(t, x;\omega)  \, \big(b(t, x) + \frac{d B_{t}}{dt}(\omega)\big ) \Big)= 0,
    \\[5pt]
    &u|_{t=0}=  u_{0},
\end{aligned}
\right .
\end{equation}
$\big( (t,x) \in U_T, \omega \in \Omega \big)$, where $U_T= [0,T] \times \R^d$, for
 $T>0$ be any fixed real number, $(d \in \N)$, $b:\R_+ \times
\R^d \to \R^d$ is a given vector field, 
with $\dive \,  b(t,x)= 0$, 
$B_{t} = (B_{t}^{1},...,B _{t}^{d} )$ is a
standard Brownian motion in $\mathbb{R}^{d}$ and the stochastic
integration is taken (unless otherwise mentioned) in the Stratonovich sense. 
In fact, through of this paper, we fix a stochastic basis with a
$d$-dimensional Brownian motion $\big( \Omega, \mathcal{F}, \{
\mathcal{F}_t: t \in [0,T] \}, \mathbb{P}, (B_{t}) \big)$.

\medskip
The Cauchy problem for the stochastic transport equation
$$
   \partial_t u(t, x;\omega) +   \big(b(t, x) + \frac{d B_{t}}{dt}(\omega)\big ) \cdot \nabla u(t, x;\omega)= 0
$$
has taken great attention recently, see for instance \cite{Falnanas},   \cite{CO4}, \cite{FGP2},  
\cite{Ku}, \cite{Ku3}, \cite{BrisLion}, and more recently the initial-boundary value problem in \cite{WNCO1}. 
Concerning the deterministic case of the problem \eqref{trasport}, also in a non-regular framework, the reader is mostly addressed to  \cite{DL} and  \cite{ambrisio}. Those papers
deal respectively with the Sobolev and the BV spatial regularity case, where the uniqueness proof  relies on commutators, see DeLellis \cite{lellis} for a 
nice review on that. The reader is directed towards the following references at the cited papers.

\medskip
The main issue in this paper is to prove uniqueness of weak  $L^{\infty}-$solution (see Definition \ref{defisolu})
of the Cauchy problem  (\ref{trasport}) for  vector fields  
\begin{equation}
\label{LPSC}
    \begin{aligned}
     &b\in L^{q}([0,T], (L^{p}(\mathbb{R}^{d}))^d), \quad p,q < \infty,
     \\[5pt]
    &p \geq 2, \quad q > 2, \quad \text{and} \quad \frac{d}{p} + \frac{2}{q} < 1.
    \end{aligned}
\end{equation}
The last condition \eqref{LPSC} is known 
in the fluid dynamic's literature 
as the Ladyzhenskaya-Prodi-Serrin condition,  with $\leq$ in place of $<$.    
Here, we do not assume  any differentiability (one of the main assumptions in \cite{Falnanas}), nor boundedness (also important in  
\cite{FGP2}) of the vector field  $b$. The uniqueness result, see Theorem \ref{uni}, is established 
using the transportation property of the continuity equation for divergence free 
vector fields.  Indeed, under the Ladyzhenskaya-Prodi-Serrin  condition for $b$, we are allowed to compose 
the solution $u$ to the transport equation with the
stochastic flow, in fact its inverse (see \eqref{itoassBac}), then 
we bring on it with all its space derivatives on the test function. Thus, avoiding the commutator
and the problems there in.  
Consequntly, we have sharpened the answer of the following question: 
Why noise improves the deterministic theory for 
transport/continuity equations?

\bigskip
In fact, that noise could improve the theory of transport equations was first discovered
by \cite{FGP2}. More precisely, the condition assumed in  \cite{FGP2} is  H\"{o}lder continuity and boundedness of $b$, and an integrability condition on the
divergence. Others results appear in  \cite{Falnanas} where no $L^{\infty}-$control on the divergence is required, however,   weak differentiability
is assumed.  Our result is more advanced in the sense  that we work with integrable and not   differentiability coefficients. Further, no boundedness of $b$ is
assumed. 
%

\bigskip
We recall that the Ladyzhenskaya-Prodi-Serrin condition \eqref{LPSC} (with local integrability, and $p,q \leq \infty$)
was first considered by  Krylov, R\"{o}ckner \cite{Krylov}. In that paper, they 
proved the existence and uniqueness
of strong solutions for SDE 
\begin{equation}\label{itoass}
X_{s,t}(x)= x + \int_{s}^{t}   b(r, X_{s,r}(x)) \ dr  +  B_{t}-B_{s},
\end{equation}
where  given $t \in [0,T ]$  and  $x \in \mathbb{R}^{d}$, it was shown that 
$$
    \mathbb{P} \big\{ \int_0^T |b(t,X_t)|^2 \ dt< \infty \big\}= 1.
$$
More recently, Fedrizzi, Flandoli see \cite{Fre1,Fre2} proved  
the  $\alpha$-H\"{o}lder  continuity of the  stochastic flow  $x \rightarrow X_{s,t} $
 for any $\alpha \in  (0, 1) $. Moreover,  they prove that it is a  stochastic flow of  homeomorphism.

\bigskip
Similarly, we may consider for convenience the inverse  $Y_{s,t}:=X_{s,t}^{-1}$, which satisfies the following backward stochastic
differential equations,
\begin{equation}\label{itoassBac}
Y_{s,t}(y)= y - \int_{s}^{t}   b(r, Y_{r,t}(y)) \ dr  - (B_{t}-B_{s}),
\end{equation}
for $0\leq s\leq t$. Usually $Y$ is called the time
reversed process of $X$.

\bigskip    
One of the main motivations to consider
problem \eqref{trasport} comes from the study of 
stochastic partial differential equations in fluid dynamics. In this 
direction, our ansatz is based on rational Continuum Mechanics.
Let $\psi$ be a physical 
quantity, which is a tensor field of order $m$. Also, we consider the supply and the flux of $\psi$, denoted 
respectively $\sigma_\psi$, $\phi_\psi$, which are  tensor fields of order 
$m$ and $m+1$. Then, the general stochastic balance equation has (at least formally) the form
$$
   \partial_t \psi + \dive \big( \psi \otimes \frac{d X_t}{dt} - \phi_\psi \big)= \sigma_\psi,
$$
with $X_t$ given for instance by 
$$
    X_{t}(x)= x + \int_{0}^{t}    \mathbf{v}(s, X_{s}(x)) \ ds  +  V_{t},
$$
where $\mathbf{v}$ is the velocity field, and $V_t$ is a stochastic process, 
which is not due necessarily to a Brownian motion, but posses for instance a Markov property. 
Therefore, our main assumption is to randomly perturb the motion of the physical quantity $\psi$. 
In particular, taking $\psi= \rho$, and $\psi= \rho \, \mathbf{v}$, 
the Cauchy problem for the  incompressible non-homogeneous stochastic Navier-Stokes equations may be written as
\begin{equation}\label{SNSEQ}
 \left \{
\begin{aligned}
    &\partial_t \rho + \dive \Big(\rho \, \big(\mathbf{v} + \frac{d V_{t}}{dt}\big )  \Big)= 0,
    \\
    &\dive \,  \mathbf{v}= 0,
    \\
    &\partial_t (\rho \, \mathbf{v}) + \dive \Big(\rho \, \mathbf{v} \otimes \big(\mathbf{v} + \frac{d V_{t}}{dt} \big) -T  \Big)= \rho \,f,
\\[5pt]
    &\rho(0)= \rho_{0}, \quad \mathbf{v}(0)= \mathbf{v}_0,
\end{aligned}
\right .
\end{equation}
where $\rho$ is the density, and $T$ is the stress tensor field given by
$$
      T= 2 \, \mu(\rho) \, D(\mathbf{v}) - p \, I_d   
$$
with the scalar function $p$ called pressure. Moreover, $D(\mathbf{v})$ is the symmetric part of the gradient of the velocity field, $\mu$ is the dynamic
viscosity, and $f$ is an external body force.
The above problem \eqref{SNSEQ} seems to us an onset of turbulence, which is a 
challenging phenomenon to understand in the (incompressible) fluid dynamics theory. 
The reader is further addressed to Flandoli \cite{FFNOISE},  Mikulevicius, Rozovskii \cite{MR},
and references therein. 

\bigskip
Moreover, the uniqueness result 
obtained by the authors for the stochastic continuity equation here in this paper, have to open 
new directions to establish existence of solutions to stochastic scalar conservation laws, 
for non-homogenous 
flux functions $f(t,x,u)$ with low regularity in the time-space variables $(t,x)$. In this direction, we recall the stochastic averaging  lemmas for
kinetic equations studied recently by Lions, Perthame, Souganidis \cite{LPS}.

\bigskip
The plan of exposition is as follows: In the rest of this section, we  shall prove existence of
weak  $L^{\infty}-$solutions via the  It\^o-Wentzell-Kunita formula. In Section 2, 
we prove the uniqueness of weak $L^{\infty}-$solutions. Moreover, we show that the unique solution is given by a representation formula, in terms of the initial data and the stochastic flow
associated to equation (\ref{trasport}). In Section 3, we present stability results  for the solution with respect to the initial datum. Finally, we discuss in Section 4 some extensions, regularity results and   
 interesting open problems are pointed out. 

\subsection{ Existence of weak solutions}
\label{EXISTENCE}

Hereupon, we assume
\begin{equation}\label{con1}
   b \in L^1_\loc(U_T), \quad \dive \, b \in L^1((0,T); L^\infty(\R^d)).
\end{equation}   
Actually, it was point out by an anonymous referee that, 
when the drift $b$ satisfies condition \eqref{LPSC}
the derivative and thus the Jacobian of the flow are regularized 
by the noise, and in particular almost bounded, without any hypothesis 
on the divergence. Therefore, it could be that, one does not need 
an $L^\infty$ spatial bound on the divergence of $b$, but just one weaker.
We leave this interesting question open. 
Also, we consider that $u_0 \in L^\infty(\R^d)$. 

\medskip
The next definition tell us in which sense a stochastic process is a weak solution  
of \eqref{trasport}. Hereafter the usual summation convention is used.

\begin{definition}\label{defisolu}  A stochastic process
$u\in L^{\infty}(U_T \times \Omega)$ is called 
a weak $L^{\infty}-$solution of the Cauchy problem \eqref{trasport},
when for any $\varphi \in C_c^{\infty}(\R^d)$, the real value process $\int  u(t,
  x)\varphi(x)
  dx$ has a continuous modification which is a
$\mathcal{F}_{t}$-semimartingale, and for all $t \in [0,T]$, we have $\mathbb{P}$-almost sure

\begin{equation} \label{DISTINTSTR}
\begin{aligned}
    \int_{\R^d} u(t,x) \varphi(x) dx &= \int_{\R^d} u_{0}(x) \varphi(x) \ dx
   +\int_{0}^{t} \!\! \int_{\R^d} u(s,x) \ b^i(s,x) \partial_{i} \varphi(x) \ dx ds
\\[5pt]
    &\;  + \int_{0}^{t} \!\! \int_{\R^d} u(s,x) \ \partial_{i} \varphi(x) \ dx \, {\circ}{dB^i_s}.
\end{aligned}
\end{equation}
\end{definition}

\begin{lemma}\label{lemmaexis1}  Under condition \eqref{con1}, 
there exists a weak $L^{\infty}-$solution $u$ of the Cauchy problem
\eqref{trasport}.
\end{lemma}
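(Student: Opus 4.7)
The plan is a standard regularization argument in which the solution is constructed explicitly for smooth drifts via the It\^o-Wentzell-Kunita formula and then passed to a weak-$*$ limit. First I would mollify in the spatial variable: fix a standard radial mollifier $\rho_\ve$ and set $b^\ve(t,\cdot):=b(t,\cdot) *_x \rho_\ve$ and $u_0^\ve:=u_0 *_x \rho_\ve$. The key properties preserved are $b^\ve\to b$ in $L^1_\loc(U_T)$, the uniform bound $\|\dive b^\ve\|_{L^1(0,T;L^\infty)}\le \|\dive b\|_{L^1(0,T;L^\infty)}$, and $\|u_0^\ve\|_{L^\infty}\le \|u_0\|_{L^\infty}$.

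For each $\ve$, since $b^\ve$ is smooth in $x$ with all derivatives bounded on compact sets, the SDE
\[
dX^\ve_{s,t}(x)=b^\ve(t,X^\ve_{s,t}(x))\,dt+dB_t,\qquad X^\ve_{s,s}(x)=x,
\]
generates a stochastic flow of $C^\infty$-diffeomorphisms with inverse $Y^\ve_{s,t}$. I would take as the candidate approximate solution the explicit representation along characteristics,
\[
u^\ve(t,x):=u_0^\ve\!\big(Y^\ve_{0,t}(x)\big)\,\exp\!\Big(-\!\!\int_0^t (\dive b^\ve)\big(s,X^\ve_{s,t}(Y^\ve_{0,t}(x))\big)\,ds\Big).
\]
A direct application of the It\^o-Wentzell-Kunita formula, legitimate because $u_0^\ve$ and $b^\ve$ are smooth in $x$, shows that $u^\ve$ satisfies the weak formulation \eqref{DISTINTSTR} with drift $b^\ve$. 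The same formula (or equivalently Liouville's identity for the Jacobian) yields the crucial uniform estimate
\[
\|u^\ve\|_{L^\infty(U_T\times\Omega)}\le \|u_0\|_{L^\infty(\R^d)}\,\exp\!\big(\|\dive b\|_{L^1(0,T;L^\infty)}\big).
\]

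By Banach-Alaoglu, a subsequence $u^\ve \rightharpoonup^* u$ in $L^\infty(U_T\times\Omega)$. I would then pass to the limit term by term in \eqref{DISTINTSTR} tested against fixed $\varphi\in C_c^\infty(\R^d)$. The initial datum term and the left-hand side converge by weak-$*$ convergence against $\varphi$. For the drift integral, strong convergence $b^\ve\partial_i\varphi\to b\partial_i\varphi$ in $L^1((0,T)\times \R^d)$ on the compact support of $\varphi$, combined with weak-$*$ convergence of $u^\ve$, gives the limit $\int_0^t\!\int u\,b^i\partial_i\varphi\,dx\,ds$. For the Stratonovich term I would first convert to It\^o form, so that a correction $\tfrac12\int_0^t\!\int u^\ve\,\Delta\varphi\,dx\,ds$ appears and passes to the limit exactly as the drift term; then, setting $N^\ve_s:=\int u^\ve(s,x)\partial_i\varphi(x)\,dx$, the uniform $L^\infty$ bound together with weak-$*$ convergence gives $N^\ve\to N$ pointwise in $(s,\omega)$ and boundedly, so by the It\^o isometry and dominated convergence the It\^o integrals $\int_0^t N^\ve_s\,dB^i_s$ converge in $L^2(\Omega)$ to $\int_0^t N_s\,dB^i_s$.

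The main obstacle will be the last step: weak-$*$ convergence in $L^\infty$ is by itself too weak to plug into a stochastic integral, and one has to exploit the fact that $\varphi$ is a fixed smooth test function so that $u\mapsto \int u\,\partial_i\varphi\,dx$ is weak-$*$ continuous pointwise in $(s,\omega)$, and that the uniform sup bound provides the dominating function required by the It\^o isometry. Once this convergence is established, the limit $u$ inherits the continuous semimartingale property of $\int u\varphi\,dx$ from the identity satisfied by the approximants and solves \eqref{DISTINTSTR} $\mathbb{P}$-almost surely.
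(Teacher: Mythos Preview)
Your approach differs from the paper's and contains two real gaps.

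First, under condition \eqref{con1} alone, mollifying $b$ in space does not give you a globally defined stochastic flow. You only have $b\in L^1_\loc(U_T)$, so $b^\ve(t,\cdot)$ is smooth in $x$ but carries no growth bound whatsoever as $|x|\to\infty$; the SDE $dX^\ve_t=b^\ve(t,X^\ve_t)\,dt+dB_t$ may explode in finite time, and your representation $u^\ve=u_0^\ve(Y^\ve_{0,t})\exp(\cdots)$ is then unavailable. At minimum you would have to truncate $b^\ve$ outside large balls and run a further diagonal/exhaustion argument, which you do not mention.

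Second, your passage to the limit in the stochastic integral is incorrect as written. Weak-$*$ convergence of $u^\ve$ in $L^\infty(U_T\times\Omega)$ does \emph{not} yield convergence of $N^\ve_s(\omega)=\int u^\ve(s,x,\omega)\,\partial_i\varphi(x)\,dx$ pointwise in $(s,\omega)$; the map $u\mapsto \int u(s,\cdot,\omega)\,\partial_i\varphi$ is an evaluation at a fixed $(s,\omega)$ and is not weak-$*$ continuous, so your dominated-convergence step has no input. A correct repair is to note that $N^\ve\rightharpoonup N$ weakly in $L^2([0,T]\times\Omega)$ and that the It\^o integral is a bounded linear operator on the adapted subspace, hence weakly continuous; you must also verify separately that the limit $u$ is adapted.

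The paper avoids both difficulties by a different construction: for each $\omega$ it solves the \emph{deterministic} continuity equation with translated drift $b(t,x+B_t(\omega))$ by DiPerna--Lions, obtaining $v\in L^\infty(U_T\times\Omega)$ directly (no flow, no growth condition needed); then it applies the It\^o--Wentzell--Kunita formula to $F(y)=\int v(t,x)\varphi(x+y)\,dx$ at $y=B_t$ and reads off that $u(t,x):=v(t,x-B_t)$ satisfies \eqref{DISTINTSTR}. No stochastic flow for $b$ is ever built and no limit is taken inside a stochastic integral.
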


\begin{proof}  1. First, let us consider  the following auxiliary Cauchy problem for
the continuity equation, that is to say
\begin{equation}\label{Auxilia1}
 \left \{
\begin{aligned}
&\partial_t v(t,x) + \dive \Big( v(t,x) \; b(t,x+B_{t}) \Big)= 0, 
\\[5pt]
& v(0,x)= u_{0}(x) .
\end{aligned}
\right.
\end{equation}
According to a minor modification of the arguments in DiPerna, Lions \cite{DL}, see
Proposition II.1 (taking only test functions defined on $\mathbb{R}^d$), it follows that,
there exists a function $v \in L^{\infty}(U_T \times \Omega)$, which is a solution  of the auxiliary problem
\eqref{Auxilia1} in the sense that, it satisfies for each test function $\varphi \in C^\infty_c(\R^d)$

\begin{equation}
\label{Auxilia2}
    \begin{aligned}
     \int_{\R^d} v(t,x) \varphi(x) dx&= \int_{\R^d} u_{0}(x) \varphi(x) \ dx
\\[5pt]
     &+ \int_{0}^{t} \!\! \int_{\R^d} v(s,x) \, b(s,x+B_{s}) \cdot \nabla \varphi(x) \ dx ds.
    \end{aligned}     
\end{equation}
One observes that,  the process $ \int v(t,x) \varphi(x) dx$  is adapted, 
since it is the weak limit in $L^{2}([0, T] \times \Omega)$ of adapted processes, see  \cite{Pardoux} Chapter III for details.

\bigskip
2. Now, let us define for each $y \in \R^d$,
$$
  F(y):=\int_{R^d} v(t,x) \, \varphi(x+y) \ dx. 
$$ 
Then, applying the It\^o-Wentzell-Kunita Formula, see Theorem 8.3 of \cite{Ku2},
to $F(B_t)$, it follows from \eqref{Auxilia2}
\begin{equation}\label{auxilia3}
\begin{aligned}
\int_{\R^d} v(t,x) \, \varphi(x+B_{t}) \ dx
&= \int_{\R^d} u_{0}(x) \, \varphi(x) \ dx 
\\[5pt]
 &+ \int_{0}^{t} \!\! \int_{\R^d} b(s,x+B_{s}) \cdot
\nabla \varphi(x+B_{s}) v(s,x) \ dx ds
\\[5pt]
  &+ \int_{0}^{t} \!\! \int_{\R^d}
 v(s,x) \; \partial_i \varphi(x+B_{s}) dx \circ dB_{s}^{i},
\end{aligned}
\end{equation}
where we have used that
$$
    \frac{\partial}{\partial y_{i}}
    \varphi(x+y)=\frac{\partial}{\partial x_{i}} \varphi(x+ y ).
$$  

\bigskip
3. Finally, defining $u(t,x):= v(t, x-B_t)$ 
we obtain from equation (\ref{auxilia3}) that, $u(t,x)$ 
is a weak  $L^{\infty}-$solution
of the stochastic Cauchy problem (\ref{trasport}).
\end{proof}

\section{Uniqueness}
\label{UNIQUE}

We prove the uniqueness result in this section, where it will be considered the divergence-free condition, that is  
\begin{equation}
\label{DIVB}
  \dive \,  b= 0
\end{equation}
(understood in the sense of distributions), and 
also the Ladyzhenskaya-Prodi-Serrin condition \eqref{LPSC}. 

\bigskip
As mentioned in the introduction, under condition 
\eqref{LPSC} we have suitable regularity of the stochastic characteristics. Indeed, under the divergence-free condition the
continuity equation turns to transport equation. Therefore, the main feature of the transport equation, which is the transportation 
property, is used by the authors to show uniqueness in a 
completely different way from the renormalization idea (which exploits  
commutators) used in \cite{Falnanas},   \cite{CO4}, \cite{FGP2}  and  \cite{BrisLion}. Then, we have the 
following  

\begin{theorem}\label{uni} Assume conditions \eqref{LPSC}, and \eqref{DIVB}.  
If $u,v \in L^\infty(U_T \times \Omega)$ are two weak  $L^{\infty}-$solutions 
for the Cauchy problem \eqref{trasport}, with the same initial data 
$u_{0}\in L^{\infty}(\mathbb{R}^{d})$, then
for each $t \in [0,T]$, $u(t)= v(t)$ almost everywhere 
in $\R^d \times \Omega$. 
\end{theorem}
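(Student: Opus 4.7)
\medskip
\noindent\textbf{Proof plan.}
By linearity of the continuity equation, it suffices to treat the case $u_0 \equiv 0$ and show that any weak $L^\infty$-solution $u$ vanishes. Fix $t \in (0,T]$ and a test function $\varphi \in C_c^\infty(\R^d)$. The idea is to exploit the dual pairing between the continuity equation and the transport equation (which are the same here thanks to \eqref{DIVB}) and to push the problem onto the stochastic flow rather than onto the low-regularity solution $u$. Concretely, I would work with the random time-dependent test function
$$
   \psi(s,y) := \varphi\bigl(Y_{s,t}(y)\bigr), \qquad 0\le s \le t,\ y\in \R^d,
$$
where $Y_{s,t}$ is the inverse flow \eqref{itoassBac}. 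Under \eqref{LPSC}, the results of Krylov--R\"ockner and Fedrizzi--Flandoli cited in the introduction give that $Y_{s,t}$ is a stochastic flow of homeomorphisms of class $C^\alpha$ for every $\alpha<1$, and in fact that $\nabla Y_{s,t}$ exists and possesses enough integrability to justify the manipulations below; in particular $\psi(s,\cdot)$ is a legitimate spatial test function and the chain rule applies.

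\medskip
The first substantive step is to derive the backward SPDE satisfied by $\psi$. Using the identity $Y_{s,t}=X_{0,s}\circ Y_{0,t}$ together with the forward dynamics $d_s X_{0,s} = b(s,X_{0,s})\,ds + \circ dB_s$, one obtains (in the Stratonovich sense)
$$
   d_s\psi(s,y) \;=\; \nabla\varphi(Y_{s,t}(y))\cdot\bigl[\,b(s,Y_{s,t}(y))\,ds + \circ dB_s\bigr],
$$
which, after a volume-preserving change of variables using $\dive b=0$, is the correct dual/backward equation. Then the main step is to apply the generalized It\^o--Wentzell--Kunita formula (Theorem 8.3 of \cite{Ku2}, as in the existence proof) to the scalar semimartingale
$$
   s \;\longmapsto\; F(s) := \int_{\R^d} u(s,y)\,\psi(s,y)\,dy.
$$
Feeding the weak formulation \eqref{DISTINTSTR} for $u$ with the adapted semimartingale test $\psi(s,\cdot)$ gives the contribution from $du$; the dynamics of $\psi$ above give the contribution from $d\psi$. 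By construction the Stratonovich drift and the Stratonovich noise coming from $u$ exactly pair with those coming from $\psi$ (because transport is the formal adjoint of divergence-free continuity), so all drift and It\^o/Stratonovich correction terms cancel and one obtains $dF(s)=0$ on $[0,t]$. Hence
$$
   \int_{\R^d} u(t,y)\,\varphi(y)\,dy \;=\; F(t)\;=\;F(0)\;=\; \int_{\R^d} u_0(y)\,\varphi\bigl(Y_{0,t}(y)\bigr)\,dy \;=\; 0,
$$
and since $\varphi\in C_c^\infty(\R^d)$ is arbitrary, $u(t,\cdot)=0$ a.e. in $\R^d\times\Omega$.

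\medskip
The most delicate point, and the main obstacle, is the rigorous application of the generalized It\^o--Wentzell--Kunita formula: $u$ is only $L^\infty$, $\psi(s,\cdot)$ is random and has only the Sobolev/H\"older regularity inherited from the flow $Y_{s,t}$, and the stochastic integrals in \eqref{DISTINTSTR} must be extended from deterministic smooth tests to such adapted semimartingale tests. This will require (a) approximating $\psi$ by smooth random fields via a spatial mollification and using the regularity estimates of \cite{Fre1, Fre2} on $Y_{s,t}$ (and on $\nabla Y_{s,t}$) to pass to the limit, (b) using $\dive b=0$ to keep all change-of-variables Jacobians equal to one, and (c) converting between Stratonovich and It\^o carefully so as to check that the required joint quadratic variations between $u$ and $\psi$ are meaningful in the $L^\infty$ setting. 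Once this regularization is in place, the cancellation of drift and noise terms is a transparent consequence of the duality between the continuity and transport formulations, and the conclusion follows as above.
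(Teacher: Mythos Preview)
Your strategic idea --- pair the weak solution $u$ against the random test $\psi(s,y)=\varphi(Y_{s,t}(y))$ and use the transport/continuity duality to make everything cancel --- is exactly the mechanism the paper exploits. Where you diverge is in the approximation scheme, and this is where your plan has a real gap.

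The paper does \emph{not} work with the actual flow $Y_{s,t}$ nor try to feed a random test into the weak formulation. Instead it regularizes on both sides: it mollifies $u$ to $u_\varepsilon=u\ast\phi_\varepsilon$, so that the equation for $u_\varepsilon$ holds pointwise in $y$ (no test function needed), and it mollifies $b$ to $b^\delta$, so that the associated backward flow $Y_t^\delta$ is smooth and $\varphi(Y_t^\delta)$ solves the transport equation classically. The It\^o product formula then applies rigorously to $u_\varepsilon(y)\,\varphi(Y_t^\delta(y))$. After an integration by parts all spatial derivatives sit on $\varphi(Y_s^\delta)$, so the limit $\varepsilon\to 0$ is immediate (this is precisely how commutators are avoided). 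What remains is
\[
\int u(t,X_t^\delta)\,\varphi\,dx
=\int_0^t\!\!\int u(s,y)\,\bigl(b(s,y)-b^\delta(s,y)\bigr)\cdot\nabla[\varphi(Y_s^\delta)]\,dy\,ds,
\]
and the limit $\delta\to 0$ uses the Fedrizzi--Flandoli uniform bound $\sup_{t,x}\mathbb E[|\nabla Y_t^\delta(x)|^p]\le C$ (independent of $\delta$). One concludes $\int u(t,X_t)\varphi=0$ and then $u=0$ by a change of variables.

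Your plan, by contrast, keeps $u$ weak and $Y$ non-regular and proposes to mollify $\psi$ spatially. This reintroduces exactly the commutator you are trying to avoid: $\psi_\eta$ no longer solves a clean backward transport SPDE, the drift picks up $(b\cdot\nabla\psi)\ast\phi_\eta - b\cdot\nabla\psi_\eta$, and controlling that with $b$ merely in $L^q_t L^p_x$ and $\psi$ of limited regularity is the hard problem. In addition, using a random adapted $\psi$ in \eqref{DISTINTSTR} requires extending the definition of weak solution beyond deterministic tests, which you flag but do not resolve; the paper sidesteps this entirely by making $u_\varepsilon$ strong. So your outline is morally correct but the regularization you sketch would not close; the double mollification with the order $\varepsilon\to 0$ then $\delta\to 0$ (and the Fedrizzi--Flandoli estimate for the second limit) is the missing technical device.
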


\begin{proof} By linearity, it is enough to show that a weak
$L^{\infty}-$solution $u$ with initial condition $u_{0}(x)=0$ vanishes
identically.  Let $\phi_{\varepsilon}, \phi_{\delta}$ be standard symmetric mollifiers. Thus $u_{\varepsilon}(t,\cdot)=u(t,\cdot)\ast \phi_{\varepsilon}$
verifies
\begin{equation} \label{DISTINTSTRAPP}
\begin{aligned}
    \int_{\R^d} u(t,z) \phi_\varepsilon(y-z) dz &= \int_{0}^{t} \!\! \int_{\R^d} u(s,z) \ b^i(s,z) \partial_{i} \phi_\varepsilon(y-z) \ dz ds
\\[5pt]
    &\;  + \int_{0}^{t} \!\! \int_{\R^d} u(s,z) \ \partial_{i} \phi_\varepsilon(y-z) \ dz \, {\circ}{dB^i_s}.
\end{aligned}
\end{equation}
One remarks that, for each $\ve> 0$ the equation for $u_\ve$ is strong in the analytic sense (which is to
say, it does not need test functions).  

\medskip
Now,   we denote by $b^{\delta}$ the standard mollification of $b$ by $\phi_{\delta}$,
and let   $X_t^{\delta }$  be  the associated flow given by  the SDE (\ref{itoass}) replacing  $b$ by 
 $b^{\delta}$.  
Similarly, we consider $Y^\delta_{t}$, which satisfies the backward SDE \eqref{itoassBac}.

\medskip
Since $\dive \,  b^\delta= 0$ (in other words, the Jacobian of the stochastic 
flow is identically one), for each $\varphi \in C^\infty_c(\R^d)$,  it follows that 
\begin{equation}
\label{PUSHFORWARD}
    \int_{\R^d}  (u \ast \phi_{\varepsilon})(X_t^{\delta}) \; \varphi(y) \ dy 
    =\int_{\R^d}  (u\ast \phi_{\varepsilon})(y) \;  \varphi(Y_t^{\delta}) \ dy ,
\end{equation}
for each $t \in [0,T]$. 

Now, we observe that $ v^{\delta}(t,x)=\varphi(Y_t^{\delta}),$ satisfies the transport equation in the classical sense, that is, it satisfies 

\begin{equation}\label{trasportC}
 \left \{
\begin{aligned}
    & dv^{\delta}(t, x;\omega) + b(t,x)  \nabla v^{\delta}(t, x;\omega)(t, x;\omega) dt + \nabla v^{\delta}(t, x;\omega)(t, x;\omega)  \circ  d B_{t}= 0,
    \\[5pt]
    & v^{\delta}|_{t=0}=  \varphi(x),
\end{aligned}
\right .
\end{equation}

On the other hand, recall that $u_\ve$ is strong in analytic sense. Then we may apply It\^o's formula to the product $$(u\ast \phi_{\varepsilon})(y) \;  \varphi(Y_t^{\delta}),$$ 
and obtain that

\begin{equation}
\label{UNIQ10}
  \begin{aligned}
  \int_{\R^d}  (u\ast \phi_{\varepsilon})(y) & \;  \varphi(Y_t^{\delta}) \ dy  
     =-\int_{0}^{t}\!\!  \int_{\R^d}  u_\varepsilon(s,y)  \; b^{\delta}(s,y) \cdot \nabla [\varphi(Y_s^{\delta})]   \ dy  ds 
\\[5pt]
&- \int_{0}^{t}  \!\! \int_{\R^d}  u_\varepsilon(s,y) \;   \partial_{i} [\varphi(Y_s^{\delta})] dy \  \circ dB_{s}^{i} 
\\[5pt]
 &+\int_{0}^{t} \!\! \int_{\R^d} \varphi(Y_s^{\delta})    \int_{\R^d} u(s,z) \; b(s,z) \cdot \nabla \phi_{\varepsilon}(y-z) \ dz dy ds 
\\[5pt]
& +  \int_{0}^{t} \!\! \int_{\R^d} \varphi(Y_s^{\delta})   \int_{\R^d} 
 u(s,z) \; \partial_{i} \phi_{\varepsilon}(y-z) \ dz dy \  \circ dB_{s}^{i}. 
\end{aligned}
\end{equation}
Then, by integration by parts, we may bring all the derivatives on $\varphi(Y^\delta)$,
hence it will be easy to pass to the limit when $\ve \to 0$ (avoiding commutators).
Therefore, from \eqref{PUSHFORWARD}, \eqref{UNIQ10}, we may write
$$
  \begin{aligned}
  \int_{\R^d}  (u\ast \phi_{\varepsilon})(X^\delta_t) & \;  \varphi(x) \ dx  
     =-\int_{0}^{t}\!\!  \int_{\R^d}  u_\varepsilon(s,y)  \; b^{\delta}(s,y) \cdot \nabla[\varphi(Y_s^{\delta})]   \ dy  ds 
\\[5pt]
&- \int_{0}^{t}  \!\! \int_{\R^d}  u_\varepsilon(s,y) \;   \partial_{i}[\varphi(Y_s^{\delta})] dy \  \circ dB_{s}^{i} 
\\[5pt]
 &+\int_{0}^{t} \!\! \int_{\R^d}   \int_{\R^d} u(s,z) \,  \phi_{\varepsilon}(y-z) \; b(s,z) \cdot \nabla[\varphi(Y_s^{\delta})]  \ dz dy ds 
\\[5pt]
& +  \int_{0}^{t} \!\! \int_{\R^d}  \int_{\R^d} 
 u(s,z) \, \phi_{\varepsilon}(y-z) \; \partial_{i}[\varphi(Y_s^{\delta})]   \ dz dy \  \circ dB_{s}^{i},
\end{aligned}
$$
where we have used that, $\phi_\varepsilon$ is symmetric.

\bigskip
Now for $\delta> 0$ fixed, passing to the limit as $\varepsilon$ goes to $0^+$, we obtain from the above equation 
\begin{equation}
\label{UNIQ20}
  \begin{aligned}
  \int_{\R^d}  u(X^\delta_t) & \;  \varphi(x) \ dx  
     =-\int_{0}^{t}\!\!  \int_{\R^d}  u(s,y)  \; b^{\delta}(s,y) \cdot \nabla[\varphi(Y_s^{\delta})]   \ dy  ds 
\\[5pt]
 &+\int_{0}^{t} \!\! \int_{\R^d}  u(s,y) \; b(s,z) \cdot \nabla[\varphi(Y_s^{\delta})]  \ dy ds.
\\[5pt]
\end{aligned}
\end{equation}
At this point, we use important and recent results obtained by Fedrizzi and  Flandoli \cite{Fre1}, 
more precisely, Lemma 3 and Lemma  5 in that paper. And for safeness of the reader, 
let us recall here the last one:

Lemma 5 (Fedrizzi and  Flandoli) For every $p \geq 1$, there exists $C_{d,p,T}> 0$
such that 
$$
    \sup_{t \in [0,T]} \sup_{x \in \R^d} \mathbb{E}[|\nabla Y_t^\delta(x)|^p] \leq C_{d,p,T}, \quad \text{uniformly in $\delta> 0$}.
$$
Then, applying the Dominated Convergence Theorem we can 
pass to the limit in \eqref{UNIQ20} as $\delta$ goes to $0^+$, to conclude that
\begin{equation}
\label{UNIQ30}
\int_{\R^d} u(X_t) \varphi(x) =0
\end{equation}
for each $\varphi \in C_c^\infty(\R^d)$, and $t \in [0,T]$. 

\bigskip
Finally, let $K$ be any compact set in $\R^d$. Then, we have 
$$
  \begin{aligned}
    \!\! \int_{ K}   \mathbb{E}| u(t,x) |  \ dx &= \lim_{\delta \rightarrow 0}   \!\! \int_K  \mathbb{E}| u(t, X_t^{\delta}(Y_t^{\delta}) ) |  \ dx 
\\[5pt]
     &=   \lim_{\delta \rightarrow 0} \;  \mathbb{E}   \!\!  \int_{Y_t^{\delta}( K)}  | u(t, X_t^{\delta}) |  \ dx  
\\[5pt]     
    &= \mathbb{E}  \!\!  \int_{Y_t( K)}   | u(t, X_t) |  \ dx = 0,
\end{aligned}
$$
 where we have used \eqref{UNIQ30} and the regularity of the stochastic flow.  
Consequently, the thesis of our  theorem is proved. 
\end{proof}

\bigskip
We  also have a representation
formula in terms of the initial condition $u_0$ and the (inverse) stochastic flow associated
to SDE  (\ref{itoass}). Then, we have the following

\begin{proposition}\label{repre} Assume conditions \eqref{LPSC}, and \eqref{DIVB}.  
Given $u_{0}\in L^{\infty}(\mathbb{R}^{d})$, the stochastic process $u(t, x):= u_0(X_{t}^{-1}(x))$ is the 
unique weak $L^{\infty}-$solution of the Cauchy problem \eqref{trasport}.
\end{proposition}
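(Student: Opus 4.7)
The plan is to leverage the uniqueness result just established. By Theorem \ref{uni}, it suffices to exhibit the explicit candidate $u(t,x) := u_0(Y_t(x))$, with $Y_t := X_t^{-1}$, as a weak $L^\infty$-solution in the sense of Definition \ref{defisolu}; since a weak solution already exists by Lemma \ref{lemmaexis1}, uniqueness will then identify it with the representation formula.

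The first step is a double regularization: mollify $b$ to a smooth divergence-free $b^\delta$ (with associated smooth flows $X_t^\delta$, $Y_t^\delta$) and mollify the initial datum to a smooth, bounded $u_0^n$. At this smooth level, $u^{\delta,n}(t,x) := u_0^n(Y_t^\delta(x))$ is classical, and I would verify directly that it is a weak solution of \eqref{trasport} with drift $b^\delta$ and initial datum $u_0^n$. Concretely, the measure-preservation $\det \nabla X_t^\delta \equiv 1$ yields
\[
   \int_{\R^d} u_0^n(Y_t^\delta(x))\,\varphi(x)\,dx = \int_{\R^d} u_0^n(x)\,\varphi(X_t^\delta(x))\,dx
\]
for every $\varphi \in C^\infty_c(\R^d)$, and the It\^o-Wentzell-Kunita expansion of $\varphi(X_t^\delta(x))$ (Theorem 8.3 of \cite{Ku2}) combined with the SDE \eqref{itoass} for $X_t^\delta$ recovers identity \eqref{DISTINTSTR} with $b^\delta, u_0^n$ in place of $b, u_0$.

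Passing $n \to \infty$ with $\delta$ fixed is immediate: since $Y_t^\delta$ is a (random) smooth measure-preserving diffeomorphism, $u_0^n(Y_t^\delta) \to u_0(Y_t^\delta)$ in $L^p_{\loc}$ and a.e., so all terms in the weak formulation converge. Passing $\delta \to 0$ is where the Ladyzhenskaya-Prodi-Serrin hypothesis earns its keep, via the Fedrizzi-Flandoli results \cite{Fre1,Fre2}: one uses $Y_t^\delta \to Y_t$ locally in probability, the a.s. bound $\|u^\delta(t)\|_{L^\infty} \le \|u_0\|_{L^\infty}$, and the $L^q_t L^p_x$ convergence $b^\delta \to b$ to pass to the limit in the drift integral $\int u^\delta b^\delta \cdot \nabla\varphi$, while the uniform Sobolev estimate $\mathbb{E}|\nabla Y_t^\delta(x)|^p \le C_{d,p,T}$ recalled in the proof of Theorem \ref{uni} controls the Stratonovich stochastic integral.

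The main obstacle will be the convergence of the Stratonovich integral $\int_0^t\!\int u^\delta \partial_i \varphi\,dx \circ dB_s^i$, since Stratonovich integration is not continuous under mere pointwise or weak convergence of the integrand. The cleanest route is to rewrite it as an It\^o integral plus its correction and argue convergence of the It\^o part using the uniform $L^2$ bound supplied by $\|u^\delta\|_{L^\infty}$ and the compact support of $\nabla\varphi$, together with convergence of the quadratic covariation, which at the approximate level is explicitly computable from the backward SDE \eqref{itoassBac} for $Y_t^\delta$ and stabilizes as $\delta \to 0$ thanks to the Fedrizzi-Flandoli estimates. Once this passage is secured, Definition \ref{defisolu} holds for $u_0(Y_t(x))$, and the proposition follows from Theorem \ref{uni}.
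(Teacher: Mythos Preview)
Your proposal is correct and follows essentially the same two-stage regularization strategy as the paper: establish the representation at the smooth level via Kunita's classical theory, then pass to the limit first in the initial datum and then in the drift using the Fedrizzi--Flandoli stability results from \cite{Fre1}, concluding by Theorem \ref{uni}. Your treatment is in fact more explicit than the paper's own proof about the convergence of the Stratonovich integral, which the paper leaves implicit.
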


\begin{proof} 1. First, let us assume that $b$ is regular, and  
we denote by $u_0^{\delta}$ the standard mollification of $u_0$.
It is well known, see for instance \cite{Ku3},  that $ u^\delta(t,x):= u_0^{\delta}(X_{t}^{-1}(x))$ is the unique classical
solution of the associated transport equation, thus a  weak $L^{\infty}-$solution o of  \eqref{trasport}
with $u^\delta$ and $u_0^{\delta}$ in place of u and $u_0$.  For each test 
function $\vp \in C_c^\infty(\R^d)$, it follows that 
$$
    \int_{\R^d} u^{\delta}(t,x) \, \varphi(x) \ dx =\int_{\R^d} u_0^{\delta}(X_{t}^{-1}) \, \varphi(x) \ dx 
$$
converges strongly in $L^2([0,T] \times \Omega)$ to 
$$
   \int_{\R^d} u_0(X_{t}^{-1}) \, \varphi(x) \ dx =\int_{\R^d} u(t,x) \, \varphi(x) \ dx .
$$
Then, $u_0(X_{t}^{-1})$ is a weak $L^{\infty}-$solution of the Cauchy problem \eqref{trasport}. 
By Theorem \ref{uni}, uniqueness theorem, it is the only one.

\bigskip
2.  Now, we denote by $b^{\delta}$ the standard mollification of $b$, and let $X_t^{\delta }$ be the associated flow given by the SDE (\ref{itoass}), i.e. replacing  $b$ by 
 $b^{\delta}$. From item 1, we have that $ u^{\delta}(t,x)= u_0(X_{t}^{\delta ,-1})$ is the unique weak $L^\infty$-solution of  \eqref{trasport}
with $u^{\delta}$ and $b^{\delta}$ in place of u and b. Then, applying 
Lemma 3 of \cite{Fre1}, we have for each test function $\vp \in C_c^\infty(\R^d)$ that 
$$
    \int_{\R^d} u^{\delta}(t,x) \, \varphi(x) \ dx =\int_{\R^d} u_0(X_{t}^{\delta ,-1}) \, \varphi(x) \ dx 
$$
converges strongly in  $ \in L^2([0,T] \times \Omega)$ to 
$$
   \int_{\R^d} u_0(X_{t}^{-1}) \, \varphi(x) \ dx = \int_{\R^d} u(t,x) \, \varphi(x) \ dx .
$$
Therefore, $u_0(X_{t}^{-1})$ is the representative formula, which is the unique 
weak $L^{\infty}-$solution of the Cauchy problem \eqref{trasport}. 
\end{proof} 


\section{Stability}
\label{STABILITY}

To end up the well-posedness for the Cauchy problem 
\eqref{trasport}, it remains to show the stability 
 property for the solution with respect to the initial datum. First, we
 establish a weak-stability result, then we show a strong one, 
 assuming the strong convergence of the initial data. 

\begin{theorem}\label{estaweak} Assume conditions \eqref{LPSC}, and \eqref{DIVB}.  
Let $\{u_0^n\}$ be any sequence, with $u_0^n \in L^\infty(\R^d)$ $(n \geq 1)$, converging weakly-star to 
$u_0 \in L^\infty(\R^d)$. Let $u(t,x)$, $u^n(t,x)$ be the unique weak $L^{\infty}-$solution of the Cauchy problem \eqref{trasport},
for respectively the initial data $u_0$ and $u_0^{n}$. Then, for all 
 $t \in  [0, T ]$, and for each function $\vp \in C_c^0(\R^d)$ $\mathbb{P}-$ a.s.
$$
   \int_{\R^d} u^n(t,x) \, \vp(x) \ dx \quad \text{converges to} \quad  \int_{\R^d} u(t,x) \, \vp(x) \ dx \quad \text{$\mathbb{P}-$ a.s.}.
$$
Moreover, if $u_0^{n}$ converges to $u_0$ in $ L^{\infty}(\mathbb{R}^{d})$, then 
\[
 u^{n}(t,x) \ converge  \ to \ u(t,x) \quad \text{in  $L^{\infty}(U_T \times \Omega)$}.
\] 
\end{theorem}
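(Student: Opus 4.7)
The plan is to exploit the representation formula from Proposition \ref{repre}, which gives $u(t,x) = u_0(X_t^{-1}(x))$ and $u^n(t,x) = u_0^n(X_t^{-1}(x))$. The only additional structural fact needed is that, thanks to $\dive b = 0$ together with the Ladyzhenskaya-Prodi-Serrin condition \eqref{LPSC}, for $\mathbb{P}$-a.e. $\omega$ the map $X_t(\cdot,\omega)$ is a volume-preserving homeomorphism of $\R^d$ -- exactly the properties already used in the proof of Theorem \ref{uni}.

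For the weak-star part, I would fix $t \in [0,T]$ and $\varphi \in C_c^0(\R^d)$, and for each $\omega$ in the full-measure set above apply the change of variable $y = X_t^{-1}(x,\omega)$, which carries Lebesgue measure to itself. This yields
\begin{equation*}
   \int_{\R^d} u^n(t,x) \, \varphi(x) \ dx = \int_{\R^d} u_0^n(y) \, g_\omega(y) \ dy, \qquad g_\omega(y) := \varphi(X_t(y,\omega)),
\end{equation*}
with the analogous identity for $u$ and $u_0$. Since $\varphi$ has compact support and $X_t(\cdot,\omega)$ is a homeomorphism, $g_\omega$ has compact support and is bounded; in particular $g_\omega \in L^1(\R^d)$. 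The weak-star hypothesis $u_0^n \rightharpoonup^* u_0$ tested against the fixed $L^1$-function $g_\omega$ then delivers the $\mathbb{P}$-a.s. convergence claimed.

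For the strong part, the representation formula immediately produces the pointwise inequality
\begin{equation*}
   |u^n(t,x,\omega) - u(t,x,\omega)| = \bigl|u_0^n(X_t^{-1}(x,\omega)) - u_0(X_t^{-1}(x,\omega))\bigr| \leq \|u_0^n - u_0\|_{L^\infty(\R^d)}
\end{equation*}
for a.e. $(t,x,\omega) \in U_T \times \Omega$. Taking the essential supremum over $U_T \times \Omega$ and then letting $n \to \infty$ closes the argument without any further approximation.

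The delicate point is really just the input to the change of variables: one must know that $X_t(\cdot,\omega)$ is both a homeomorphism (so that $g_\omega$ has compact support) and measure-preserving (so that the transformed integral carries no Jacobian factor) for $\mathbb{P}$-almost every $\omega$. Under \eqref{LPSC} and $\dive b = 0$ both ingredients are available from the Fedrizzi-Flandoli theory already invoked, so once Proposition \ref{repre} is in hand the stability reduces to a clean application of weak-star convergence for the first assertion and a one-line supremum bound for the second -- no commutator, mollification, or It\^o calculus is needed at this stage.
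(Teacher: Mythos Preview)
Your proposal is correct and follows essentially the same route as the paper: invoke the representation formula from Proposition~\ref{repre}, use the volume-preserving change of variables $y=X_t^{-1}(x)$ (valid since $\dive b=0$) to rewrite $\int u^n(t,x)\varphi(x)\,dx=\int u_0^n(y)\,\varphi(X_t(y))\,dy$, and then apply the weak-star hypothesis against $\varphi(X_t(\cdot))\in L^1$; the strong part is the identical one-line supremum bound. If anything, you are slightly more explicit than the paper in noting that the homeomorphism property of $X_t$ is what guarantees $\varphi\circ X_t$ has compact support (hence lies in $L^1$), which is exactly the point needed to test weak-star convergence.
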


\begin{proof} 1. From Proposition \ref{repre}, we may write
$$
     u^{n}(t,x)=u_0^{n}(X_{t}^{-1}), \quad \text{and} \quad u(t,x)=u_0(X_{t}^{-1}).
$$
Since $\dive \,  b= 0$ (in other words, the Jacobian of the stochastic flow is identically one), for each $\varphi \in C^\infty_c(\R^d)$,  it follows that

\begin{equation}\label{e1}
\int_{\R^d} u^{n}(t,x) \varphi(x) \ dx =\int_{\R^d} u_0^{n}(X_{t}^{-1}) \varphi(x) \ dx =\int_{\R^d} u_0^{n}(x) \varphi(X_{t}) \ dx 
\end{equation}

and analogously

\begin{equation}\label{e2}
\int_{\R^d} u(t,x) \varphi(x) \ dx =\int_{\R^d} u_0(X_{t}^{-1}) \varphi(x) \ dx = \int_{\R^d} u_0(x) \varphi(X_{t}) \ dx .
\end{equation}

Now, by hypothesis $u_0^{n}$  converges  weak$\ast$ $ L^{\infty}(\mathbb{R}^{d})$ to $u_0$, thus 

\[
\int u_0^{n}(x) \varphi(X_{t}) \ dx 
\quad \text{converge to} \quad
\int u_0(x) \varphi(X_{t}) \ dx,
\]
for all  $t \in  [0, T ]$ and
 $\mathbb{P}-$ a.s. From equations  (\ref{e1}) and (\ref{e2}) we deduce that $ u^{n}(t,x)$ converge to $u(t,x)$  weak$\ast$ $ L^{\infty}(\mathbb{R}^{d})$ 
 for all $t \in  [0, T ]$ and  $\mathbb{P}-$ a.s., which finish the proof of weak-stability. 

\medskip
2. Now, let us consider the strong-stability. Again, from Proposition  \ref{repre} we have
\[
u^{n}(t,x)=u_0^{n}(X_{t}^{-1}) \quad \text{and} \quad u(t,x)=u_0(X_{t}^{-1}).
\]
Therefore, we have
$$
   \begin{aligned}
   \sup_{U_T \times \Omega} |u^{n}(t,x)- u(t,x)|&= \sup_{U_T \times \Omega}|u_0^{n}(X_{t}^{-1})- u_0(X_{t}^{-1})|
   \\[5pt]
   &\leq \sup_{\mathbb{R}^{d}}|u_0^{n}(x)- u_0(x)|.
   \end{aligned}
$$
Then, the thesis follows by the 
hypothesis that $u_0^{n}$ converges to $u_0$  in $ L^{\infty}(\mathbb{R}^{d})$.
\end{proof} 


\begin{remark}
It remains open the stability result with respect to $b$ under the Ladyzhenskaya-Prodi-Serrin condition \eqref{LPSC}. 
If additionally,  we assume that $b(t)$ belongs to  $W_{loc}^{1,1}$ or $BV_{loc}$, we can show the  stability 
 property for the solution with  respect to $b$. In fact, the notion 
of renormalized solutions is valid and can  be extended quite easily to a stochastic
framework. For interesting remarks on  renormalized solutions  in the stochastic case see \cite{Falnanas}. 
\end{remark}

\section{Final comments} 

1.  Following the same arguments in the proof of Theorem \ref{uni}, we may also 
treat the case in which b is H\"{o}lder not necessarily bounded, with $\dive b= 0$.
Indeed, from Theorem 7 in \cite{FGP}, if  $b$ is H\"{o}lder not necessarily bounded, then
$X_{t}$ is a H\"{o}lder continuous stochastic flow of diffeomorphisms. Then, 
a similar result of Theorem  \ref{uni} and Proposition \ref{repre} hold.

\bigskip
2. As pointed out by Colombini, Luo and Rauch in \cite{Colom}, 
there exists an important example of $b \in L^\infty \cap W^{1,p}, (\forall p < \infty)$, such that
the  propagation of the continuity in the deterministic transport equation is missing. That is to say,
even if the uniqueness is established in this case, the persistence condition is not, one may start 
with a continuous initial data, but the deterministic solution of the transport equation is not continuous.
However, in the stochastic case we have the persistence property. In fact, let 
$u(t,x)$ be the unique weak $L^{\infty}-$ solution of the Cauchy problem \eqref{trasport},
with $u_0 \in  C_b(\mathbb{R}^{d})$ (i.e. a continuous bounded function). By Proposition \ref{repre}, we have 
$$
   u(t,x)=u_0(X_{t}^{-1}),
$$
and recall that under condition \eqref{LPSC},  $X_{t}$ is a H\"{o}lder continuous stochastic flow of homeomorphisms
(see Section 5 of \cite{Fre2}). Therefore, the continuity of $u$ follows. Analogously, we have the persistence property
when $b$ is H\"{o}lder, not necessarily bounded, with $\dive b= 0$. Indeed, see item 1 above.

\medskip
Also concerning the persistence property, we recall from \cite{Fre1} that a certain Sobolev  regularity 
is maintained under the Ladyzhenskaya-Prodi-Serrin condition, that is, 
$$
   u_0 \in \bigcap_{r\geq 1} W^{1,r}  \Rightarrow u(t,.) \in \bigcap_{r\geq 1} W_{\loc}^{1,r}.
$$

\bigskip
3. It seems to us a very interesting question if  \eqref{LPSC} is sharp, which is to say
if we can consider 
\begin{equation}
\label{LPSC2}
   \frac{d}{p} + \frac{2}{q} \leq 1 \quad \text{instead of} \quad  \frac{d}{p} + \frac{2}{q} < 1.
\end{equation}
This question is posed in particular for SDEs, and personal communications from Krylov and R\"{o}ckner
tell us that it remains open. 

\medskip
If we assume that $b$ does not depend on $t$, $\dive b= 0$
(i.e. autonomous divergence free vector fields), consider dimension $d= 2$, and suppose that \eqref{LPSC2} is true, 
then we may have a uniqueness result for the stochastic transport equation  without any geometrical condition as required in the deterministic case, see in 
particular Hauray \cite{MH}, and also Alberti, Bianchini, Crippa \cite{ABC}. Moreover, these results could open new ideas to solve the Muskat Problem (at least in dimension 2), 
see Chemetov, Neves \cite{CN1,CN2}.

\section*{Acknowledgements}

We would like to thank the anonymous referee 
for providing very constructive comments and help us 
improve this paper.

\smallskip
Wladimir Neves is partially supported by
CNPq through the following grants 
484529/2013-7, 308652/2013-4 
and also by FAPESP through the grant 2013/15795-9. 
Christian Olivera is partially supported by  FAEPEX 1324/12, 
FAPESP 2012/18739-0, FAPESP 2012/18780-0,  and CNPq through the grant 460713/2014-0..  


\end{document}